\newtheorem{theorem}{Theorem}
\newtheorem{corollary}{Corollary}
\newtheorem{lemma}{Lemma}
\renewcommand{\ge}{\geqslant}
\renewcommand{\le}{\leqslant}
\renewcommand{\phi}{\varphi}
\def\R{{\mathbb R} }
\def\EN{{\mathcal E} }
\def\M{{\mathcal M} }
\def\D{\kappa}
\title[Regularity of nonlocal minimal cones]{
Regularity of nonlocal 
minimal cones \\ in dimension $2$}
\author{Ovidiu Savin and Enrico Valdinoci}
\begin{document}

\begin{abstract}
We show that the only nonlocal $s$-minimal cones in $\R^2$ are the trivial ones for all $s \in (0,1)$. As a consequence we obtain that the singular set of a nonlocal minimal surface has at most $n-3$ Hausdorff dimension.
\end{abstract}

\maketitle

\section{Introduction}

Nonlocal minimal surfaces were introduced in ~\cite{CRS} as boundaries of measurable sets $E$ whose characteristic function $\chi_E$ minimizes  a certain $H^{s/2}$ norm. 
More precisely, for any~$s\in(0,1)$, the nonlocal ~$s$-perimeter functional ${\rm Per}_s(E,\Omega)$ of a measurable set $E$ in an open set $\Omega \subset \R^n$ is defined as the $\Omega$-contribution of $\chi_E$ in $\|\chi_E\|_{H^{s/2}}$, that is
\begin{equation}\label{F}
{\rm Per}_s(E,\Omega):= 
L(E\cap\Omega,\R^n \setminus E)+L(E\setminus\Omega,\Omega\setminus 
E),\end{equation}
where $L(A,B)$ denotes the double integral
 $$ L(A,B):=\int_A\int_B\frac{dx\,dy}{|x-y|^{n+s}}, \quad \quad \mbox{$A$,$B$ measurable sets.}$$

A set ~$E$ is~{\it $s$-minimal}
in~$\Omega$ if~${\rm Per}_s(E,\Omega)$ is finite and
$$ {\rm Per}_s(E,\Omega)\le {\rm Per}_s(F,\Omega)$$
for any measurable set~$F$ for which~$E\setminus\Omega=F\setminus\Omega$.

We say that~$E$ is~$s$-minimal in~$\R^n$ if
it is~$s$-minimal in any ball ~$B_R$ for any~$R>0$.
The boundary of $s$-minimal sets are referred to as {\it nonlocal $s$-minimal surfaces}.

The theory of nonlocal minimal surfaces developed in ~\cite{CRS} is
(at least for some features)
similar to the theory of standard minimal surfaces. In fact as $s\rightarrow 1^-$, the $s$-minimal surfaces converge to the classical
minimal surfaces and the functional in~\eqref{F} (after a 
multiplication
by a factor of the order of~$(1-s)$) Gamma-converges
to the classical perimeter functional (see~\cite{CV, ADM}).

% Given the regularity of
%the classical minimal hypersurfaces in suitably low dimension,
%a natural question is whether or not also the~$s$-minimal sets
%enjoy some kind of regularity.

In ~\cite{CRS} it was shown that nonlocal $s$-minimal surfaces are $C^{1,\alpha}$ outside a singular set of Hausdorff dimension $n-2$. The precise dimension of the singular set is determined by the problem of existence in low dimensions of a nontrivial global $s$-minimal cone (i.e. an $s$-minimal set~$E$
such that~$tE=E$ for any~$t>0$). In the case of classical minimal surfaces Simons theorem states that the only global minimal cones in dimension $n \le 7$  must be half-planes, which implies that the Hausdorff dimension of the singular set of a minimal surface in $\R^n$
is~$n-8$. In ~\cite{CV2}, the authors used these results to show that if ~$s$ is sufficiently
close to~$1$ the same holds for $s$-minimal surfaces i.e. global $s$-minimal cones must be half-planes if $n \le 7$ and the Hausdorff dimension of the singular set is $n-8$.  

Given the nonlocal character of the functional in \eqref{F}, it seems more difficult to analyze global $s$-minimal cones for general values of~$s\in(0,1)$. The purpose of this paper
is to show that there are no nontrivial $s$-minimal cones
in the plane. Our theorem is the following.

\begin{theorem}\label{THM}
If~$E$ is an $s$-minimal cone in~$\R^2$, then~$E$ is a half-plane.
\end{theorem}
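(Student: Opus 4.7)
I argue by contradiction: suppose $E \subset \R^2$ is an $s$-minimal cone that is not a half-plane. Since $E$ is a cone, it is determined by its angular trace $\Theta := E \cap S^1$, a finite union of open arcs, and $\partial E \setminus \{0\}$ is a finite union of rays from the origin. Each such ray is a regular part of $\partial E$, so the Euler--Lagrange equation for $s$-minimality yields
$$
H_s[\partial E](x) \;:=\; \mathrm{PV}\int_{\R^2} \frac{\chi_{E^c}(y) - \chi_E(y)}{|y-x|^{2+s}}\,dy \;=\; 0, \qquad x \in \partial E \setminus \{0\}.
$$
The homogeneity $H_s[\partial E](\lambda x) = \lambda^{-s} H_s[\partial E](x)$ for $\lambda > 0$ reduces this to one condition per boundary ray.

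\emph{Step 1 (single-sector case).} Suppose first that $E$ is a single sector of opening angle $\alpha \neq \pi$. Replacing $E$ by $\R^2 \setminus E$ if needed, I may take $\alpha < \pi$, so $E$ is convex. Let $R$ be a boundary ray of $E$ and let $H$ be the half-plane whose boundary line contains $R$ and whose interior contains $E$. Then $E \subsetneq H$ with $H \setminus E$ of positive measure, and for any $x \in R \setminus \{0\}$
$$
\chi_{E^c}(y) - \chi_E(y) \;\ge\; \chi_{H^c}(y) - \chi_H(y) \quad\text{for a.e.\ } y \in \R^2,
$$
with strict inequality on $H \setminus E$. Integrating against the positive kernel $|y-x|^{-(2+s)}$ yields $H_s[\partial E](x) > H_s[\partial H](x) = 0$, contradicting the Euler--Lagrange equation.

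\emph{Step 2 (multi-sector case).} When $\Theta$ has two or more connected components the previous comparison fails, because distant components can cancel the naive local contribution to $H_s$; for instance the ``X-cone'' $\{xy > 0\}$ satisfies $H_s \equiv 0$ on $\partial E\setminus\{0\}$ by reflection symmetry across both coordinate axes. To eliminate such configurations I plan to combine a sliding argument with the nonlocal strong maximum principle: for a carefully chosen unit vector $v$, the translates $\{E + t v\}_{t > 0}$ form a one-parameter family of $s$-minimal sets, and by comparing $\mathrm{Per}_s$ of $E \cap (E + tv)$ and $E \cup (E + tv)$ via the submodularity of $L(\cdot,\cdot)$ one produces admissible competitors whose energy differences can be analyzed by a Fubini-type rearrangement of the kernel. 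Pushing $t \to 0^+$ and invoking the Hopf-type boundary strong maximum principle for nonlocal minimal surfaces then forces $\partial E$ to be translation-invariant in the direction $v$; iterating over two independent choices of $v$ (afforded by the multi-component assumption) forces $\partial E$ to be a line.

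\emph{Main obstacle.} The crux is Step 2. Because $H_s$ is nonlocal, exact cancellations between remote sectors make pointwise sign analysis of $H_s$ along a single ray insufficient, and naive translations of a multi-sector cone generally yield sets that are neither contained in nor containing $E$ (one checks this already for the X-cone). The genuinely two-dimensional input is the one-dimensionality of $\partial E$: the angular configuration is encoded by finitely many angles on $S^1$, and the delicate point is to extract enough \emph{strict} monotonicity from the translation family---not merely the non-strict monotonicity coming from submodularity---to over-determine these angles and collapse $\Theta$ to a semicircle.
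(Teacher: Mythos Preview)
Your Step~1 is correct, but your proposal is genuinely incomplete: Step~2 is a plan, not a proof, and you yourself identify the obstruction. The X-cone example shows that the Euler--Lagrange equation alone cannot close the argument, and your sliding scheme stalls precisely because $E$ and $E+tv$ are not nested, so submodularity gives you nothing strict. You have not supplied any mechanism that converts the ``almost'' information from translation into a quantitative contradiction.

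The paper resolves exactly this difficulty, and the two missing ideas are these. First, pass to the Caffarelli--Silvestre extension $u$ of $\chi_E-\chi_{E^c}$ and work with the weighted Dirichlet energy $\EN_R(u)=\int_{B_R^+}|\nabla u|^2 x_{n+1}^a\,dX$. In that setting the right competitors are $v_R=\min(u,u_R^+)$ and $w_R=\max(u,u_R^+)$, where $u_R^+$ equals $u(\,\cdot-e_1)$ in $B_{R/2}^+$ and is glued back to $u$ near $\partial B_R^+$; the pointwise identity $|\nabla v_R|^2+|\nabla w_R|^2=|\nabla u|^2+|\nabla u_R^+|^2$ replaces your submodularity and requires no containment between $E$ and $E+e_1$. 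Second---and this is the step your outline has no analogue of---one proves a quantitative discrete second-variation estimate: for $u$ homogeneous of degree zero,
\[
\EN_R(u_R^+)+\EN_R(u_R^-)-2\EN_R(u)=O(R^{n-3+a}),
\]
which in $n=2$ is $O(R^{-s})\to 0$. Combined with minimality this yields $\EN_R(u_R^+)\le \EN_R(u)+CR^{-s}$. Now the strong maximum principle (applied to the extension, where it is a local elliptic statement) shows $w_R$ is \emph{not} a minimizer in a fixed ball $B_{2M}^+$, hence can be improved by a fixed $\delta>0$ independent of $R$; chaining the inequalities gives $\EN_R(u_*)+\delta\le \EN_R(u)+CR^{-s}$, a contradiction for large $R$. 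Your sliding intuition is morally right, but without the extension framework and the $O(R^{-s})$ energy estimate there is no way to extract the needed strictness.
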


{F}rom Theorem~\ref{THM} above and Theorem~9.4 
of~\cite{CRS},
we obtain that~$s$-minimal sets in two-dimensional domains
are locally~$C^{1,\alpha}$.
Also, from Theorem~\ref{THM} and classical blow-up and blow-down
arguments\footnote{For instance,
one can use the proof of Theorem III.8.17
in~\cite{maggi}, where the density estimates, the compactness
arguments and the
monotonicity formulas for classical minimal surfaces are replaced
by the ones in~\cite{CRS}.

Of course, in all the results presented,
we are implicitly ruling out the trivial case
in which either the $s$-minimal set $E$ or 
its complement is empty.}, we obtain 
that
$s$-minimal sets in the plane are half-planes.
We summarize these observations in the following result:

\begin{corollary}\label{COR}
If~$E$ is an $s$-minimal set in~$\Omega\subset\R^2$, and 
$\Omega'\Subset\Omega$, then~$(\partial 
E)\cap\Omega'$ is a $C^{1,\alpha}$-curve.  

If~$E$ is an $s$-minimal set in~$\R^2$, then~$\partial E$ is a straight 
line.
\end{corollary}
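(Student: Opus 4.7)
The plan is to deduce both assertions from Theorem~\ref{THM} by combining it with the $\varepsilon$-regularity result (Theorem~9.4 of~\cite{CRS}) and the standard cone-reduction machinery for $s$-minimal surfaces from~\cite{CRS}. Structurally the argument mirrors the proof of Theorem~III.8.17 of~\cite{maggi}, as indicated in the footnote, with classical density estimates, compactness and monotonicity replaced by their nonlocal analogues.

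For the first assertion, I would argue by blow-up at a boundary point. Fix $x_0 \in (\partial E)\cap\Omega'$ and consider the rescalings $E_r := (E-x_0)/r$; by the scaling of $\mathrm{Per}_s$, the sets $E_r$ are $s$-minimal in balls whose radii diverge as $r \to 0^+$. The uniform density estimates and the $L^1_{\mathrm{loc}}$ compactness of $s$-minimizers from~\cite{CRS} extract a subsequential limit $E_0$ which is $s$-minimal in all of $\R^2$, and the equality case of the nonlocal monotonicity formula forces $E_0$ to be an $s$-minimal cone. By Theorem~\ref{THM}, $E_0$ is a half-plane. Hence $\partial E$ is $L^1_{\mathrm{loc}}$-close to a hyperplane at sufficiently small scales around $x_0$, and Theorem~9.4 of~\cite{CRS} upgrades this flatness to $C^{1,\alpha}$ regularity of $\partial E$ in a neighbourhood of $x_0$; a covering of $\Omega'$ then finishes the first part.

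For the second assertion, let $E$ be $s$-minimal in $\R^2$ with $\partial E \neq \emptyset$ (excluding the trivial cases). Fix $x_0 \in \partial E$. The blow-up procedure above shows that every tangent cone to $E$ at $x_0$ is a half-plane. Running the same compactness argument on the blow-downs $E/R$ as $R \to \infty$ produces a tangent cone at infinity that is $s$-minimal in $\R^2$, and by Theorem~\ref{THM} is again a half-plane. The monotone quantity in the nonlocal monotonicity formula based at $x_0$ therefore takes the same (half-plane) value in the limits $r \to 0^+$ and $r \to \infty$; being monotone in $r$, it is constant, and the rigidity case forces $E$ to be an $s$-minimal cone centered at $x_0$. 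A final application of Theorem~\ref{THM} identifies $E$ with a half-plane, so $\partial E$ is a straight line.

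The step I expect to be most delicate is purely organizational: one needs the nonlocal monotonicity formula in a form strong enough that constancy of the monotone quantity forces the cone property, and one needs Theorem~9.4 of~\cite{CRS} applicable under the merely $L^1_{\mathrm{loc}}$ flatness extracted from the blow-up step. Both ingredients are present in~\cite{CRS}, so the work reduces to stitching them together along the classical dimension-reduction scheme of~\cite{maggi}.
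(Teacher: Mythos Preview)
Your proposal is correct and follows exactly the route the paper indicates: the paper does not give a detailed proof of this corollary but only remarks (in the text and footnote preceding it) that the first assertion follows from Theorem~\ref{THM} together with Theorem~9.4 of~\cite{CRS}, and the second from blow-up/blow-down arguments as in Theorem~III.8.17 of~\cite{maggi}, with the classical density, compactness, and monotonicity tools replaced by those of~\cite{CRS}. Your write-up simply fleshes out this sketch, so there is nothing to compare.
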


In higher dimensions, by combining the result of Theorem~\ref{THM} here 
with the dimensional reduction performed in~\cite{CRS}, we
obtain that
any nonlocal $s$-minimal surface in~$\R^n$
is locally~$C^{1,\alpha}$ outside a singular set
of Hausdorff dimension~$n-3$.

\begin{corollary}\label{COR2}
Let~$\partial E$ be a nonlocal $s$-minimal surface in~$\Omega\subset\R^n$
and let ~$\Sigma_E \subset \partial E \cap \Omega$ denote its singular set.
Then~${\mathcal{H}}^d(\Sigma_E)=0$
for any~$d>n-3$.
\end{corollary}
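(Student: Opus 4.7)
\medskip

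\noindent\textbf{Proof proposal for Corollary~\ref{COR2}.} The plan is to run the classical Federer-type dimension reduction argument, using Theorem~\ref{THM} as the base case of the induction. The machinery I need (density estimates, uniform $s$-perimeter bounds, compactness of $s$-minimal sets in $L^1_{\rm loc}$, and monotonicity of a suitable scale-invariant quantity that identifies blow-up limits as $s$-minimal cones) is already set up in~\cite{CRS}, so my task is to arrange these tools so that the singular stratum loses one dimension each time one passes to a blow-up cone.

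\medskip

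\noindent The argument goes by contradiction: assume $\mathcal{H}^d(\Sigma_E)>0$ for some $d>n-3$. First I would pick a point $x_0 \in \Sigma_E$ where the upper $d$-density of $\mathcal{H}^d\lfloor \Sigma_E$ is positive, and blow up $E$ at $x_0$ along a suitable subsequence of scales $r_k\to 0$. By the compactness and monotonicity results of~\cite{CRS} the rescaled sets $(E-x_0)/r_k$ converge in $L^1_{\rm loc}$ to an $s$-minimal cone $C_1\subset \R^n$, and the standard comparison between the singular set of the limit and the limit of the singular sets (together with the density estimates that prevent singularities from vanishing in the limit) yields $\mathcal{H}^d(\Sigma_{C_1})>0$.

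\medskip

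\noindent Next I would iterate: if $C_1$ is not a half-space, pick a singular point $x_1\in \Sigma_{C_1}\setminus\{0\}$ again of positive $d$-density, and blow up $C_1$ at $x_1$. Since $C_1$ is already a cone, a standard argument shows that the new blow-up $C_2$ is an $s$-minimal cone that is additionally translation-invariant in the direction $x_1/|x_1|$; hence $C_2 = \R\times C_2'$ for an $s$-minimal cone $C_2'$ in $\R^{n-1}$, and the singular set of $C_2'$ inherits positive $\mathcal{H}^{d-1}$ measure. Repeating this reduction $n-2$ times, I arrive at an $s$-minimal cone $C'\subset \R^{2}$ whose singular set has positive $\mathcal{H}^{d-(n-2)}$ measure. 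Because $d-(n-2)>-1$, the singular set $\Sigma_{C'}$ is nonempty, contradicting Theorem~\ref{THM}, which forces $C'$ to be a half-plane and thus $\Sigma_{C'}=\emptyset$. This contradiction proves $\mathcal{H}^d(\Sigma_E)=0$ for every $d>n-3$.

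\medskip

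\noindent The main technical obstacle is the translation-invariance step: one must verify that when a cone is blown up at a point different from the vertex, the resulting tangent cone genuinely splits off a line. For classical minimal surfaces this is handled via the monotonicity formula; here one needs the nonlocal analogue from~\cite{CRS}, together with care in showing that the $L^1_{\rm loc}$-limit of translation-invariant approximants remains translation-invariant and that the $s$-minimality is preserved under the splitting. The other delicate point, namely the semicontinuity of the singular set under blow-ups (so that a positive-density singular point persists in the limit), follows from the uniform density estimates of~\cite{CRS} exactly as in the classical theory.
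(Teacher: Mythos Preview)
Your proposal is correct and matches the paper's approach: the paper does not give a separate proof of Corollary~\ref{COR2} but simply states that it follows by combining Theorem~\ref{THM} with the dimensional reduction carried out in~\cite{CRS}, which is exactly the Federer-type argument you outline. Your write-up just unpacks that reference in more detail than the paper does.
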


The idea of the proof of Theorem \ref{THM} is the following. If $E\subset 
\R^2$ is an $s$-minimal cone then we construct a set $\tilde E$ as a translation of $E$ in $B_{R/2}$ which coincides with $E$ outside $B_R$. 
Then the difference between the energies (of the extension) of $\tilde E$ and $E$ tends to $0$ as $R \to \infty$.  
This implies that also the energy of $E \cap \tilde E$ is arbitrarily close to the energy of $E$.   
On the other hand if $E$ is not a half-plane the set $\tilde E \cap E$ can be modified locally to decrease its energy by a fixed small amount and we reach a contradiction.

In the next section we introduce some notation and obtain the perturbative estimates that are needed for the proof of Theorem ~\ref{THM} in Section~\ref{S2}.

\section{Perturbative estimates}\label{S1}

We start by introducing some notation.

{\bf Notation.}

We denote points in~$\R^n$ by lower case letters, such 
as~$x=(x_1,\dots,x_n)\in\R^n$ and
points in~$\R^{n+1}_+:=\R^n\times(0,+\infty)$
by upper case letters, such
as~$X=(x,x_{n+1})=(x_1,\dots,x_{n+1})\in\R^{n+1}_+$.

The open ball in~$\R^{n+1}$ of radius~$R$ and center $0$ is denoted by~$B_R$.
Also we denote by~$B_R^+:=B_R \cap \R^{n+1}_+$ the open half-ball in~$\R^{n+1}$ and by~$S^n_+:= S^n\cap \R^{n+1}_+$ the unit half-sphere.

The fractional parameter~$s\in(0,1)$ will be fixed
throughout this paper; we also set
$$a:=1-s\in(0,1).$$

The standard Euclidean base of~$\R^{n+1}$
is denoted 
by~$\{e_1,\dots,e_{n+1}\}$. Whenever there is no possibility of confusion we identify $\R^n$ with the hyperplane $\R^n \times \{0\} \subset \R^{n+1}$.

The transpose of a square matrix~$A$ will be denoted by~$A^T$,
and the transpose of a row vector~$V$ is the column vector
denoted by~$V^T$. We denote by~$I$ the identity matrix in $\R^{n+1}$.

\

We introduce the functional
\begin{equation}\label{ENR}
\EN_R (u):=\int_{B_R^+} |\nabla u(X)|^2 x_{n+1}^a \,dX.
\end{equation}
which is related to the $s$-minimal sets
by an extension problem, as shown in Section~7 of~\cite{CRS}.
More precisely, given a set~$E\subseteq\R^n$
with locally finite~$s$-perimeter, we can associate to it uniquely
its extension function~$u:\R^{n+1}_+\rightarrow\R$ 
whose trace on~$\R^n\times\{0\}$ is given by~$
\chi_E-\chi_{\R^n\setminus E}$ and which
minimizes the energy functional in~\eqref{ENR} for any~$R>0$.

We recall (see Proposition~7.3
of~\cite{CRS})
that~$E$ is~$s$-minimal in~$\R^n$ if and only if its 
extension~$u$ is minimal for the energy in~\eqref{ENR} under compact 
perturbations whose trace in~$\R^n\times\{0\}$ takes the values~$\pm1$.
More precisely, for any~$R>0$, 
\begin{equation}\label{2bis}
\EN_R(u)\le\EN_R(v)\end{equation}
for any~$v$ that coincides with~$u$ on~$\partial B_R^+\cap \{x_{n+1}>0\}$
and whose trace on~$\R^n\times\{0\}$ is given 
by~$
\chi_F-\chi_{\R^n\setminus F}$ for any measurable set~$F$
which is a compact perturbation of~$E$ in~$B_R$.

Next we estimate the variation of the functional in~\eqref{ENR}
with respect to horizontal domain perturbations.
For this we introduce a standard cutoff
function
$$ \mbox{~$\phi\in C_0^\infty(\R^{n+1})$,
with~$\phi(X)=1$ if~$|X|\le 1/2$ and~$\phi(X)=0$ if~$|X|\ge 3/4$.}$$
Given~$R>0$, we let
\begin{equation} \label{34}
Y :=X+ \phi(X/R) e_1.\end{equation}
Then we have that~$X\mapsto Y=Y(X)$
is a diffeomorphism of~$\R^{n+1}_+$
as long as~$R$ is sufficiently large
(possibly in dependence of~$\phi$).

Given a measurable function~$u:\R^{n+1}_+\rightarrow\R$,
we define
\begin{equation}\label{A0}
u^+_R(Y):=u(X).\end{equation}
Similarly, by switching~$e_1$ with~$-e_1$ (or~$\phi$
with~$-\phi$ in~\eqref{34}), we can define~$u^-_R(Y)$.

In the next lemma we estimate a discrete second variation for the energy $\EN_R(u)$.
\begin{lemma}\label{P}
Suppose that~$u$
is homogeneous of degree zero and~$\EN_R(u)<+\infty$.
Then
\begin{equation}\label{A00}
\big| \EN_R(u^+_R)+\EN_R(u^-_R)-2\EN_R(u)\big|\le C R^{n-3+a},
\end{equation}
for a suitable~$C\ge0$, depending on~$\phi$ and~$u$.
\end{lemma}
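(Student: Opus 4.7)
The plan is to rewrite both $\EN_R(u^+_R)$ and $\EN_R(u^-_R)$ as integrals over the common domain $B_R^+$ by changing variables $Y = T_\pm(X)$, with $T_\pm(X) := X \pm \phi(X/R)e_1$. Since $\phi$ is supported in $B_{3R/4}$, the map $T_\pm$ equals the identity near $\partial B_R^+$ and hence (for $R$ large) is a diffeomorphism of $B_R^+$ onto itself; moreover $y_{n+1}=x_{n+1}$. Combined with the chain-rule identity $\nabla_Y u^\pm_R = (DT_\pm^{-1})^T \nabla_X u$, this produces
\begin{equation*}
\EN_R(u^\pm_R) = \int_{B_R^+} \nabla u(X)^T A^\pm(X)\, \nabla u(X)\, x_{n+1}^a\, dX,
\end{equation*}
where $A^\pm := |\det DT_\pm|\, DT_\pm^{-1}(DT_\pm^{-1})^T$.

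Writing $w(X):=R^{-1}(\nabla\phi)(X/R)$, one has $DT_\pm = I \pm e_1 w^T$, a rank-one perturbation of the identity. The matrix-determinant lemma gives $\det DT_\pm = 1 \pm w_1$ and the Sherman--Morrison formula gives $DT_\pm^{-1} = I \mp e_1 w^T/(1 \pm w_1)$; a direct expansion then yields
\begin{equation*}
A^\pm = (1 \pm w_1)\, I \;\mp\; (e_1 w^T + w\, e_1^T) \;+\; \frac{|w|^2}{1 \pm w_1}\, e_1 e_1^T.
\end{equation*}
The crucial feature of the symmetric second difference is the cancellation of every term odd in the sign: the contributions $\pm w_1 I$ and $\mp(e_1 w^T + w\,e_1^T)$ vanish upon summation, leaving only
\begin{equation*}
A^+ + A^- - 2I = \frac{2|w|^2}{1 - w_1^2}\, e_1 e_1^T,
\end{equation*}
a nonnegative rank-one matrix of pointwise size at most $C R^{-2}$.

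It follows that $\EN_R(u^+_R)+\EN_R(u^-_R)-2\EN_R(u) = \int_{B_R^+} \tfrac{2|w|^2}{1-w_1^2}(\partial_1 u)^2\, x_{n+1}^a\, dX$, an integrand supported in the annulus $\{R/2 \le |X| \le 3R/4\}$ where $|w| \le C/R$. The hypothesis that $u$ is homogeneous of degree zero makes $|\nabla u|^2 x_{n+1}^a$ homogeneous of degree $a-2$, so the rescaling $X \mapsto rX$ gives $\int_{B_r^+} |\nabla u|^2 x_{n+1}^a\, dX = r^{n-1+a}\,\EN_1(u)$; in particular the annular energy is bounded by $C(u)\,R^{n-1+a}$. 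Combined with the pointwise factor $|w|^2 \le C R^{-2}$ this produces $C(u)\, R^{n-3+a}$, which is the claim. The main technical obstacle is precisely the algebraic rank-one cancellation $A^+ + A^- - 2I = O(|w|^2)$: without the antisymmetry between $+\phi$ and $-\phi$ one would only obtain $O(R^{n-2+a})$ from the linear-in-$w$ terms, which would be insufficient for the application in Section~\ref{S2}.
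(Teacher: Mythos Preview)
Your proof is correct and follows essentially the same route as the paper: pull back $\EN_R(u^\pm_R)$ by the diffeomorphism $Y=X\pm\phi(X/R)e_1$, expand the resulting quadratic form, observe that the terms linear in $\nabla\phi$ cancel when the $+$ and $-$ contributions are summed, and then use the degree-zero homogeneity of $u$ to bound the annular integral by $CR^{n-1+a}$. The only (minor) difference is that you carry the exact rank-one identity $A^++A^--2I=\tfrac{2|w|^2}{1-w_1^2}e_1e_1^T$ rather than the paper's $O(R^{-2})\chi_R$ bookkeeping; this even yields the slight refinement that $\EN_R(u^+_R)+\EN_R(u^-_R)-2\EN_R(u)\ge 0$, but otherwise the arguments coincide.
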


\begin{proof}
We start with the following 
observation.
Let us consider the square matrix of order $(n+1)$
$$A:=
\left(
\begin{matrix}
a_1 & \dots & \dots & a_{n+1}\cr
0 & \dots & \dots & 0\cr
 & \ddots & \cr
0 & \dots & \dots & 0
\end{matrix}
\right)
$$
with~$1+a_1\ne0$. Then a direct computation shows that
\begin{equation}\label{A1}
(I+A)^{-1}=I-\frac{1}{1+a_1} A=I-\frac{A}{
\det(I+ A)}.
\end{equation}
Now, we define
$$ \chi_R(X):=
\left\{\begin{matrix}
1 & {\mbox{ if }} R/2\le |X|\le R, \\
0 & {\mbox{ otherwise }}\end{matrix}
\right.$$
and
$$ \M(X):= \frac1R\left(\begin{matrix}
\partial_1\phi(X/R) & \dots &\dots&\partial_{n+1}\phi(X/R) \cr
0 & \dots & \dots& 0\cr
& \ddots & \cr
0 & \dots & \dots& 0
\end{matrix}
\right).$$
Notice that
\begin{equation}\label{A2} \M=O(1/R) \,\chi_R
.\end{equation}
Let now
$$ \D(X):=|\det D_X Y(X)|=\det (I+\M(X))
=1+\frac{\partial_1\phi(X/R)}{R}=
1+\,{\rm tr}\,\M(X).
$$
By~\eqref{A1}, we see that
\begin{equation}\label{A3} 
\big(D_X Y \big)^{-1}=
\big(I+\M\big)^{-1}=I-\frac{\M}{\D}
.\end{equation}
Also, $1/\D= 1+O(1/R)$, therefore, by~\eqref{A2},
\begin{equation}\label{A4}
\frac{\M \,\M^T}{\D}=
O(1/R^2)\chi_R
.\end{equation}
Now, we perform some chain rule differentiation of
the domain perturbation. 
For this, we take~$X$ to be a function of~$Y$; also,
the functions~$u$, $Y$, $\chi_R$, $\M$ and~$\D$  will be evaluated 
at~$X$,
while~$u^+_R$ will be evaluated at~$Y$
(e.g., the row vector $\nabla_X u$ is a short notation for~$\nabla_X 
u(X)$, while~$\nabla_Y u^+_R$ stands for~$\nabla_Y u^+_R(Y)$).
We use~\eqref{A0} and~\eqref{A3}
to obtain
$$ \nabla_Y u^+_R=
\nabla_X u\, D_Y X =
\nabla_X u\, \big(D_X Y \big)^{-1}
=\nabla_X u\,
\left(I-\frac{\M}{\D}\right).$$
Also, by changing variables,
$$ dY=|\det D_X Y|\,dX=\D\,dX.$$
Accordingly
\begin{eqnarray*}
\big| \nabla_Y u^+_R\big|^2 y_{n+1}^a\,dY &=&
\nabla_X u
\,\left(I-\frac{\M}{\D}\right)
\,\left(I-\frac{\M}{\D}\right)^T\,
\big( \nabla_X u\big)^T x_{n+1}^a\,\D\,dX
\\ &=&
\nabla_X u
\,\left(\D\, I-\M
-\M^T+\frac{\M\M^T}{
\D}\right)\,
\big( \nabla_X u\big)^T x_{n+1}^a\,\,dX
\\ &=&
\nabla_X u
\,\left(\big( 1+\,{\rm tr}\,\M\big)I-{\M}
-{\M^T}+\frac{\M\M^T}{\D}\right)\,
\big( \nabla_X u\big)^T x_{n+1}^a\,dX.
\end{eqnarray*}
Hence, from~\eqref{A4},
\begin{equation*}\begin{split}
&\big| \nabla_Y u^+_R \big|^2 y_{n+1}^a\,dY\\ &\qquad=
\nabla_X u
\,\Big( \big( 1+\,{\rm tr}\,\M\big)I-{\M}
-{\M^T}+O(1/R^2)\chi_R\Big)\,
\big( \nabla_X u\big)^T x_{n+1}^a\,dX.\end{split}\end{equation*}
The similar term for~$\nabla_Y u^-_R$ may be computed
by switching~$\phi$ to~$-\phi$
(which makes~$\M$ switch to~$-\M$): thus we obtain
\begin{equation*}\begin{split}
&\big| \nabla_Y u^-_R\big|^2 y_{n+1}^a\,dY\\ &\qquad=
\nabla_X u
\,\Big( \big( 1-\,{\rm tr}\,\M\big) I+{\M}
+{\M^T}+O(1/R^2)\chi_R\Big)\,
\big( \nabla_X u\big)^T x_{n+1}^a\,dX.
\end{split}\end{equation*}
By summing up the last two expressions, after simplification
we conclude that
\begin{equation}\label{A9}
\Big( \big| \nabla_Y u^+_R\big|^2 +
\big| \nabla_Y u^-_R\big|^2 \Big) y_{n+1}^a\,dY \,=\,
2 \Big(1+O(1/R^2)\chi_R\Big)\,\big|\nabla_X u\big|^2\,x_{n+1}^a\,dX.
\end{equation}
On the other hand, the function~$g(X):=\big|\nabla_X 
u(X)\big|^2\,x_{n+1}^a$ is homogeneous of degree~$a-2$, hence
\begin{equation*}\begin{split}
& \int_{B_R^+} \chi_R \big|\nabla_X u\big|^2\,x_{n+1}^a\,dX
=\int_{B_R^+\setminus B_{R/2}^+} g\,dX
= \int_{R/2}^R \left[ \int_{S^n_+} g(\vartheta\varrho)\,d\vartheta\right]\,
\varrho^n d\varrho
\\ &\qquad=
\int_{R/2}^R \varrho^{n+a-2}
\left[ \int_{S^n_+} 
g(\vartheta )\,d\vartheta\right]\,d\varrho= C R^{n+a-1},
\end{split}\end{equation*}
for a suitable~$C\ge0$ depending on~$u$. This and~\eqref{A9}
give that
\begin{eqnarray*}
&& \int_{B_R^+} \Big( \big| \nabla_Y u^+_R\big|^2 + 
\big| \nabla_Y u^-_R\big|^2 \Big) y_{n+1}^a\,dY
-2\int_{B_R^+} \big|\nabla_X u\big|^2\,x_{n+1}^a\,dX
\\&&\qquad = O(1/R^2)
\int_{B_R^+} \chi_R\big|\nabla_X u\big|^2\,x_{n+1}^a\,dX
\\&&\qquad = O(1/R^2)\cdot C R^{n+a-1}
,\end{eqnarray*}
which completes the proof of the lemma.
\end{proof}

Lemma  \ref{P} turns out to be particularly
useful when~$n=2$.
In this case~\eqref{A00} yields
\begin{equation}\label{09}
\EN_R(u^+_R)+\EN_R(u^-_R)-2\EN_R(u) \le \frac{C}{R^{s}},
\end{equation}
and the right hand side becomes arbitrarily small for large~$R$.
As a consequence, we also obtain the following corollary.

\begin{corollary}\label{CP}
Suppose that~$E$ is an~$s$-minimal cone in~$\R^2$ and 
that~$u$ is the extension
%\footnote{For further details on the extension, recall the beginning of Section~\ref{S1} here or Section~7 of~\cite{CRS}.}
of~$\chi_E-\chi_{\R^2\setminus E}$.
Then
\begin{equation}\label{92bis}
\EN_R(u^+_R) 
\le \EN_R(u)+\frac{C}{R^{s}} .
\end{equation}
\end{corollary}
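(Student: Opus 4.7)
The plan is to combine the symmetric discrete second variation estimate \eqref{09} with the $s$-minimality of $u$. Since $E$ is a cone, its extension $u$ is homogeneous of degree zero, and $\EN_R(u)<+\infty$ follows from $E$ being $s$-minimal in $\R^2$; hence Lemma~\ref{P} applies and gives \eqref{09}. The idea is that \eqref{09} controls the \emph{sum} $\EN_R(u^+_R)+\EN_R(u^-_R)$ from above, while minimality, applied to the competitor $u^-_R$, controls $\EN_R(u^-_R)$ from below by $\EN_R(u)$; subtracting then isolates the desired upper bound for $\EN_R(u^+_R)$ alone.

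The one point that requires checking is that $u^+_R$ and $u^-_R$ are admissible competitors to $u$ in the sense of \eqref{2bis}. For this I would use the support property of $\phi$: whenever $|X|\ge 3R/4$ we have $\phi(X/R)=0$, so the map $X\mapsto Y = X+\phi(X/R)e_1$ (and likewise with $-\phi$) reduces to the identity there. In particular, for $R$ large, both diffeomorphisms fix $\partial B_R^+\cap\{x_{n+1}>0\}$, so $u^\pm_R = u$ on that set. Moreover, the restriction of each diffeomorphism to $\R^n\times\{0\}$ is a compactly supported horizontal transformation, so it sends $E$ to a set $F^{\pm}$ that coincides with $E$ outside a compact subset of $B_R$; thus the trace of $u^\pm_R$ on $\R^n\times\{0\}$ is $\chi_{F^{\pm}}-\chi_{\R^n\setminus F^{\pm}}$ for a compact perturbation $F^\pm$ of $E$ in $B_R$, as required.

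Granted this, \eqref{2bis} applied to $v=u^-_R$ yields $\EN_R(u)\le \EN_R(u^-_R)$. Substituting this into \eqref{09} gives
$$\EN_R(u^+_R)\le 2\EN_R(u)+\frac{C}{R^s}-\EN_R(u^-_R)\le \EN_R(u)+\frac{C}{R^s},$$
which is \eqref{92bis}. There is no serious obstacle; the only genuine verification is the admissibility of $u^\pm_R$, and that reduces to the cutoff structure of $\phi$ already built into the construction \eqref{34}.
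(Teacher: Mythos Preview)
Your argument is correct and follows essentially the same route as the paper: invoke homogeneity of $u$ (since $E$ is a cone) to apply Lemma~\ref{P} and obtain \eqref{09}, then use the minimality inequality $\EN_R(u)\le\EN_R(u^-_R)$ from \eqref{2bis} to isolate the bound on $\EN_R(u^+_R)$. Your explicit verification that $u^\pm_R$ are admissible competitors is more detailed than the paper's treatment, but the logic is identical.
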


\begin{proof}
Since~$E$ is a cone, we know that~$u$
is homogeneous of degree zero (see Corollary~8.2 in~\cite{CRS}):
thus, the 
assumptions of Lemma~\ref{P} are fulfilled
and so~\eqref{09} holds true.

{F}rom the minimality of~$u$ (see~\eqref{2bis}), we infer that
\begin{equation*}
\EN_R(u)\le \EN_R(u^-_R),\end{equation*}
which together with~\eqref{09} gives the desired claim.
\end{proof}

\section{Proof of Theorem~\ref{THM}}\label{S2}

We argue by contradiction, by supposing that~$E\subset\R^2$
is an~$s$-minimal cone different than a half-plane.
By Theorem~10.3 in~\cite{CRS},~$E$
is the disjoint union of a finite number of
closed sectors. Then, up
to a rotation, we may suppose that a sector of~$E$ has angle less
than~$\pi$ and
is bisected by~$e_2$. Thus,
there exist~$M\ge1$ and~$p\in B_M$, on the $e_2$-axis, such that $p$
lies in the interior of $E$,
and~$p+ e_1$ and $p-e_1$ lie in the exterior of $E$.

Let $R>4M$ be sufficiently large. Using the notation of Lemma~\ref{P} we have
\begin{equation}\label{76}\begin{split}
&{\mbox{$u^+_R(Y)=u(Y-e_1)$,
for all $Y\in B_{2M}^+$, and}}\\
&{\mbox{$u^+_R(Y)=u(Y)$ for all $Y\in \R^{3}_+\setminus 
B_{R}^+$,}}
\end{split}\end{equation}
where~$u$ is the extension of~$\chi_E-\chi_{\R^2\setminus E}$. 
We define 
\begin{equation*}
 v_R(X):=\min \{ u(X), \,u^+_R(X)\} \quad  {\mbox{ and }} \quad 
w_R(X):=\max \{ u(X), \,u^+_R(X)\}.\end{equation*}
Denote ~$P:=(p,0)\in\R^3$. We claim that
\begin{equation}\label{not}\begin{split}
&{\mbox{$u^+_R<w_R=u$
in a neighborhood of~$P$, and}}\\
&{\mbox{$u<w_R=u^+_R$
in a neighborhood of~$P+e_1$.}}\end{split}\end{equation}
Indeed, by~\eqref{76}
\begin{equation*}
u^+_R(P)=u(P-e_1)=(\chi_E-\chi_{\R^2\setminus 
E})(p-e_1)
=-1\end{equation*}
while
\begin{equation*}
u(P)=(\chi_E-\chi_{\R^2\setminus E})(p)=1.\end{equation*}
Similarly,~$u^+_R(P+e_1)=u(P)=1$ while~$u(P+e_1)=-1$.
This and the continuity of the functions $u$ and $u_R^+$ at $P$, respectively $P+e_1$, give~\eqref{not}.

We point out that~$\EN_R(u)\le \EN_R(v_R)$,
thanks to~\eqref{76} and the minimality of~$u$.
This and the identity
$$
\EN_R(v_R)+\EN_R(w_R)=
\EN_R(u)+\EN_R(u^+_R)$$
imply that
\begin{equation}\label{56}\begin{split}
\EN_R(w_R)\le \EN_R(u^+_R).
\end{split}\end{equation}
Now we observe that
$w_R$ is not a minimizer for~$\EN_{2M}$
with respect to compact perturbations
in~$B_{2M}^+$.
Indeed,
if~$w_R$ were a minimizer we use $u \le w_R$ and the first fact in~\eqref{not} to conclude $u=w_R$ in~$B_{2M}^+$ from the strong maximum principle. However this contradicts the second
inequality in~\eqref{not}.

Therefore, we can modify~$w_R$ inside a compact set of ~$B_{2M}^+$
and obtain a competitor~$u_*$ such that
\begin{equation*}\label{92}
\EN_{2M}(u_*)+\delta \le \EN_{2M}(w_R),
\end{equation*}
for some~$\delta>0$, independent of~$R$
(since~$w_R$ restricted to~$B_{2M}^+$
is independent of~$R$, by~\eqref{76}).

The inequality above implies
\begin{equation}\label{991}
\EN_R (u_*)+\delta \le \EN_R(w_R),\end{equation}
since~$u_*$ and~$w_R$ coincide outside~$B_{2M}^+$.
Thus, we use~\eqref{92bis}, \eqref{56} and~\eqref{991}
to conclude that
$$ \EN_R (u_*)+\delta\le \EN_R(w_R)\le \EN_R(u^+_R)
\le \EN_R(u)+\frac{C}{R^{s}} .$$
Accordingly, if~$R$ is large enough 
we have that~$\EN_R (u_*) < \EN_R(u)$,
which contradicts the minimality of~$u$.
This completes the proof of Theorem~\ref{THM}.

\section*{Acknowledgments}

It is a pleasure to thank Luigi Ambrosio,
Xavier Cabr{\'e} and Giampiero Palatucci for their interesting
comments on a first draft of this paper.

OS has been supported by
NSF grant 0701037. EV has been
supported by MIUR project
``Nonlinear Elliptic problems in the study of vortices and related
topics'', ERC project  ``$\varepsilon$:
Elliptic Pde's and Symmetry of Interfaces and Layers
for Odd Nonlinearities'' and FIRB project ``A\&B: Analysis and Beyond''.
Part of this work was carried out while EV was visiting Columbia
University.

\vfill 

\vspace{1cm}

{{\sc Ovidiu Savin}

Mathematics Department, Columbia University,

2990 Broadway, New York, NY 10027, USA.

Email: {\tt savin@math.columbia.edu}
}

\vspace{1cm}

{{\sc Enrico Valdinoci}

Dipartimento di Matematica, Universit\`a degli Studi di Milano, 

Via Cesare Saldini 50, 20133 Milano, Italy.

Email: {\tt enrico.valdinoci@unimi.it}
}

\end{document}